\newlength{\defbaselineskip} \setlength{\defbaselineskip}{\baselineskip}
\theoremstyle{plain}
\newtheorem{thm}{Theorem}[section]
\newtheorem{cor}[thm]{Corollary}
\newtheorem{df}[thm]{Definition}
\newtheorem{lema}[thm]{Lemma}
\newtheorem{obs}[thm]{Proposition}
\newtheorem{exm}[thm]{Example}
\newtheorem*{tthm}{Main Theorem}
\newtheorem{pr}{Program}
\newtheorem{rem}[thm]{Remark}
\theoremstyle{definition} 
\theoremstyle{definition}  %
 \numberwithin{equation}{section}
\DeclareMathOperator{\rk}{rank}
\DeclareMathOperator{\diag}{diag}
\DeclareMathOperator{\sing}{Sing}
\DeclareMathOperator{\h}{ht}
\DeclareMathOperator{\Spec}{Spec}
\def\p{\mathbb{P}}
\def\ob{\begin{obs}}
\def\kob{\end{obs}}
\def\dow{\begin{proof}}
\def\kdow{\end{proof}}
\def\kwadrat{\hfill$\square$}
\def\tw{\begin{thm}}
\def\ktw{\end{thm}}
\def\lem{\begin{lema}}
\def\klem{\end{lema}}
\def\ex{\begin{exm}}
\def\prog{\begin{pr}}
\def\kprog{\end{pr}}
\def\wn{\begin{cor}}
\def\kwn{\end{cor}}
\def\uwa{\begin{rem}}
\def\kuwa{\end{rem}}
\def\kex{\end{exm}}
\def\dfi{\begin{df}}
\def\kdfi{\end{df}}
\begin{document}
\title[Birational maps between Calabi-Yau manifolds]{Birational maps between Calabi-Yau manifolds associated to webs of quadrics}
\author{Mateusz Micha\l ek}
\thanks{The author is supported by a grant of Polish MNiSzW (N N201 2653 33).}

\address{Mathematical Institute of the Polish Academy of Sciences,\linebreak ul. \'{S}%
niadeckich 8, 00-950 Warszawa, Poland}

\address{Unit\'{e} Mixte de Recherche 5582 CNRS --- Universit\'{e} Grenoble
1, \linebreak
100 rue des Maths, BP 74, 38402 St Martin d'H\`{e}res, France}
\email{wajcha2@poczta.onet.pl}

\maketitle
\begin{abstract}
We consider two varieties associated to a web of quad\-rics $W$ in $\p^7$. One is the base locus and the second one is the double cover of $\p^3$ branched along the determinant surface of $W$. We show that small resolutions of these varieties are Calabi-Yau mani\-folds. We compute their Betti numbers and show that they are not birational in the generic case. The main result states that if the base locus of $W$ contains a plane then in the generic case the two varieties are birational. 
\end{abstract}
\section*{Introduction}
In this paper we investigate the connection between two Calabi-Yau varieties associated to a web $W$ of quadrics in $\p^7$ ({\it{i.e.}} $W=\{\lambda_0Q_0+\dots+\lambda_3Q_3\mid(\lambda_0:\lambda_1:\lambda_2:\lambda_3)\in\p^3\}$, where $Q_i$ are linearly independent elements of $\mathcal O_{\p^7}(2)$). One of them is the base locus of the web. The second one is the double cover of $\p^3$ branched along the surface corresponding to degenerated quadrics of the system. We will consider generic webs and generic webs among webs containing a plane in the base locus.

The main result of the paper, where everything is defined over the field $\mathbb{C}$ of complex numbers (or any algebraically closed field of characteristic $0$), is the following theorem:
\begin{tthm}
Let $\textit W_P$ be a generic web of quadrics in $\p^7$ containing a given plane $P$. The base locus of $W_P$ and the double cover of $\p^3$ branched along the determinant surface corresponding to degenerated quadrics are birational varieties.
\end{tthm}
We will give a geometric description of the birational map and we will find correspondence between the singularities of the considered varieties (theorem \ref{10}). We also prove that both varieties admit small resolutions being Calabi-Yau manifolds. In case of a generic web (not containing a plane in the base locus) we show that the varieties have the same Euler characteristic, although they are not birational.

The following theorem was an inspiration for our work:
\tw
Let $X$ be a smooth intersection of two quadrics $Q_1$ and $Q_2$ in $\p^3$. Let $\p^1$ be the pencil spanned by $Q_1$ and $Q_2$. Then the variety $X$ is isomorphic to the double cover of $\p^1$ branched at the four points corresponding to the singular quadrics in the pencil. 
\ktw
This is a one dimensional analogue of our main theorem. The two dimensional case was investigated in \cite{NM} and \cite{Jan}.

In the first section we investigate the properties of the base locus $BS(W)$ of a web $W$ of quadrics in the projective space $\mathbb{P}^7$ and we recall a few general facts about webs of quadrics in projective spaces. We also describe the singularities of the base locus containing a plane and find a small resolution that is a Calabi-Yau manifold. We also compute some invariants of the considered varieties.

In the second section we describe the determinant surface associated to a web of quadrics. We also find some properties of the double cover of $\p^3$ branched along that surface.

The third section is the most important one. It combines the results of the first two parts to prove the main theorem.

\section{The base locus of a web of quadrics in $\mathbb{P}^7$}

In this section we will consider two cases:
\begin{enumerate}
\item a generic web of quadrics in $\mathbb{P}^7$,
\item a generic web of quadrics containing a fixed plane.
\end{enumerate}
We start with general remarks concerning webs of quadrics.
\ob\label{podstdim} The dimension of the family of webs of quadrics in $\p ^7$ equals 128.
\kob
\dow
Let $G(k,n)$ be a Grassmannian that parameterizes $k$-dimensional affine subspaces of an $n$-dimensional affine space.
Webs of quadrics in $\p^7$ are parameterized by $G(4,36)$ and $\dim G(4,36)=4\times 32=128$.
\kdow
We now compute the dimension of the family of webs of quadrics that contain a plane in the base locus. 
\ob \label{dim2} The dimension of the family of webs of quadrics in $\p^7$ that contain a plane in the base locus is equal to 119.
\kob
\dow
We consider $G(3,8)\times G(4,36)$. The first Grassmannian parameterizes planes in $\p ^7$, the second one webs of quadrics in $\p ^7$. Now, we consider the variety $S\subset G(3,8)\times G(4,36)$ defined by \[S=\{(p,s)\mid p\subset BS(s)\}.\] 
The fibers of the natural projection of $S$ onto the first coordinate are of dimension $\dim G(4,30)=104$.
By \cite[6.3, thm. 7]{Shaf} we get $\dim S=\dim G(3,8)+104=119$.
We now consider the second projection $q:S\rightarrow Q=q(S)\subset G(4,36)$, where $Q$ is the set of webs of quadrics that contain a plane in the base locus. We have already seen that $\dim Q\leq 119$. To see that $\dim Q=119$ we consider three varieties corresponding to three cases:
\begin{enumerate}
\item webs containing two disjoint planes in the base locus,
\item webs containing in the base locus two planes intersecting along a line,
\item webs containing in the base locus two planes intersecting at a point.
\end{enumerate}
One can easily check that the dimension of the corresponding varieties is respectively: 110, 114 and 111. This means that a generic web that contains a plane in the base locus, contains exactly one plane. We see that the generic fiber of $q$ has exactly one element, so $\dim Q=\dim S=119$.
\kdow
In the same way we can prove the following proposition:
\ob\label{dimcontaingivenplane}
The dimension of webs of quadrics in $\p^7$ containing a fixed plane $P$ in the base locus equals $\dim G(4,30)=104$.
\kob
By a repeated use of Bertini's theorem we obtain the following lemma:
\lem
Let $\emph W$ be a generic web of quadrics in $\p ^7$. Then the base locus $BS(W)$ is smooth.
\klem\kwadrat

However, the case of a generic web of quadrics that contain a common plane is more complicated. In the case of a net of quadrics in $\p ^5$ that contain a common line, the base locus could be smooth \cite{Jan}. In our case the following, general theorem proves that the intersection is always singular.
\tw\label{sing} If a projective variety $X\subset \p ^n$ contains a linear subspace $L$ of dimension $k$ and the defining ideal $I(X)=(f_1,\dots ,f_{n-2k+1})$, where $f_i$ are homogenous polynomials of the same degree, greater then 1, then at some point $x$ of $L$ the tangent space to $X$ is of dimension at least $2k$.
\ktw
\dow
We choose such a coordinate system that $L=\{(x_0:\dots :x_{n})\mid x_0=x_1=\dots =x_{n-k-1}=0\}$. 
Of course the tangent space to $X$ at any point $x\in L$ contains $L$. Let $f=(f_1,\dots ,f_{n-2k+1})$. The differential $d_xf$ is a matrix with $(n+1)$ columns and $(n-2k+1)$ rows. As the vector subspace $x_0=x_1=\dots =x_{n-k-1}=0$ of $\mathbb{C}^{n+1}$ is contained in the kernel of $d_xf$, the $(k+1)$ last columns of $d_xf$ are zero. Let $A$ be the matrix obtained from the first $n-k$ columns of $d_xf$. It is enough to prove that for some point $x\in L$, this matrix has rank less or equal to $n-2k$. The entries of $A$ are polynomials in $x_{n-k},\dots,x_n$ - depending on the choice of point $x$. These polynomials are of equal degree, say $d$. Let $l_i$ be the $i$-th row of matrix $A$. We want to prove that there exist $\lambda_j$, $1\leq j\leq n-2k+1$, not all equal to zero, such that $\sum_{j=1}^{n-2k+1}\lambda_j l_j=0$ for some $x_i$, $n-k\leq i\leq n$ not all equal to zero. 
These equations are of bidegree $(1,d)$. We have $n-k$ such equations. We may write them as $s_i=0$ for $1\leq i\leq n-k$, where $s_i\in \mathcal{O}_{\p ^{n-2k}\times \p ^k}(1,d)$. We know that $\mathcal{O}_{\p ^{n-2k}\times \p ^k}(1,d)$ is very ample and $\p ^{n-2k}\times \p ^k$ is of dimension $n-k$, so the zero-set of $n-k$ generic sections is not empty.
\kdow
Our next aim will be to compute how many singular points belong to the base locus of a generic system $\emph W_P$ spanned by four quadrics containing a fixed plane $P$. We start with the following lemma.

\lem \label{smooth}
The intersection of three generic quadrics containing a fixed plane $P$ is smooth.
\klem
\dow Let $Q_1$, $Q_2$ and $Q_3$ be three generic quadrics containing the plane $P$. Let $f=(Q_1,Q_2,Q_3)$. The singular points of the intersection satisfy: \[Q_1(x)=Q_2(x)=Q_3(x)=0\]\[\rk d_x f<3.\]
Due to the Bertini theorem it is enough to prove that such points do not exist on the plane $P$. Proceeding as in theorem \ref{sing}, we suppose that the plane $P$ is given by equations \[x_0=\dots=x_4=0.\] Let $A$ be the matrix obtained by choosing the first five columns of $d_xf$. This is a $5\times 3$ matrix with entries that are linear forms in $x_5$, $x_6$ and $x_7$. Let $l_1$, $l_2$ and $l_3$ be the rows of matrix $A$. We want to prove that in the generic case the equation \[\sum_{i=1}^3\lambda_i l_i=0\] does not have solutions in $((\lambda_1:\lambda_2:\lambda_3),(x_5:x_6:x_7))\in \p^2\times \p^2$. This equation may be written as intersection of five sections $s_j\in \mathcal{O}_{\p^2\times \p^2}(1,1)$, $1\leq j\leq 5$. Since $\dim\p^2\times\p^2=4$, the zero-set of five generic sections is empty.
\kdow
\lem\label{rrad}
The ideal $I$ generated by at most 4 generic quadrics in $\p^7$ that contain a plane $P$ is radical.
\klem
\dow
By Bertini's theorem the singular locus of the intersection is contained in $P$. We consider the following properties of a ring $A$\footnote{These are called Serre's conditions. We are using notation of \cite[17.1]{MatCA}.}:
 \begin{itemize}
\item the condition $(S_k)$ holds iff the depth of $(A_P)\geq\inf(k,\h P)$ for all prime ideals $P\in\Spec A$,

\item the condition $(R_k)$ holds iff the ring $A_P$ is regular for all ideals $P\in\Spec A$ such that $\h P\leq k$.
\end{itemize}
Using basic properties of Cohen-Macaulay rings one can prove that the ring $\mathbb C[X_0,\dots ,X_7]/I$ satisfies conditions $S_k$ for any $k$ and $R_0$, which is equivalent to being reduced and Cohen-Macaulay. 
\kdow
\begin{obs}\label{10osobliwych}
The intersection of 4 generic quadrics containing a common plane $P$ in $\p ^7$ has $10$ nodes lying on the plane $P$ as the only singularities.
\end{obs}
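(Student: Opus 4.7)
\emph{Plan of proof.} Fix coordinates on $\p^7$ so that $P=\{x_0=\cdots=x_4=0\}$. Since each quadric of the web contains $P$, we may write $Q_i=\sum_{k=0}^{4}x_k L_{ik}$ for $i=1,\dots,4$, where $L_{ik}$ are linear forms in $x_0,\dots,x_7$. Iterating the Bertini argument used in Lemma \ref{smooth} (the linear system of all quadrics containing $P$ has base locus exactly $P$, so successive generic intersections are smooth off $P$) confines the singular locus of $BS(W_P)$ to $P$. At a point $p\in P$ the partial derivative $\partial Q_i/\partial x_k$ for $k\geq 5$ lies in the ideal of $P$ and hence vanishes, while for $k\leq 4$ we have $\partial Q_i/\partial x_k(p)=L_{ik}(p)$, a linear form in the three coordinates $(x_5\!:\!x_6\!:\!x_7)$ of $p$. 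Consequently $p$ is a singular point of $BS(W_P)$ if and only if the $4\times 5$ matrix $M(p)=(L_{ik}(p))$ has rank at most $3$.

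View $M$ as a morphism of vector bundles $\varphi\colon\mathcal O_P^{\oplus 5}\to\mathcal O_P(1)^{\oplus 4}$ on $P\cong\p^2$. The singular locus of $BS(W_P)$ then becomes the degeneracy locus $D_3(\varphi)$, of expected codimension $(5-3)(4-3)=2=\dim P$. By the Thom--Porteous formula its class is
\[
[D_3(\varphi)]=c_1(F-E)^2-c_2(F-E),
\]
and since $c(F-E)=(1+h)^4$ with $h$ the hyperplane class of $\p^2$, one computes $c_1^2-c_2=16h^2-6h^2=10h^2$, yielding exactly $10$ points. A standard Bertini/parameter-count argument on the $104$-dimensional space of Proposition \ref{dimcontaingivenplane} shows that for generic $W_P$ these ten points are distinct and $\rk M(p)=3$ at each.

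It remains to verify that each such point is an ordinary double point. Fix a singular point $p$. Because $\rk M(p)=3$, the cokernel of $M(p)$ is one-dimensional; let $\lambda\in\mathbb C^4$ span it. Then $Q_\lambda:=\sum_i\lambda_iQ_i$ has vanishing differential at $p$, hence is singular at $p$. After a change of basis we may assume $Q_4=Q_\lambda$ while $Q_1,Q_2,Q_3$ have linearly independent differentials at $p$, so that their common vanishing locus $Y$ is locally smooth of dimension $4$ and $BS(W_P)=Y\cap Q_4$ near $p$. The tangent cone to $BS(W_P)$ at $p$ is cut out of $T_pY\cong\a^4$ by the Hessian quadric of $Q_4$ at $p$. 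Thus $p$ is a node precisely when the restriction of this Hessian to $T_pY$ is non-degenerate, i.e.\ defines a smooth quadric surface in $\p(T_pY)\cong\p^3$.

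The main obstacle is the final genericity claim: for a Zariski-open set of webs $W_P$, the degeneracy locus $D_3(\varphi)$ must consist of $10$ reduced points \emph{and} at each such point the restricted Hessian must be non-degenerate. Both are open conditions on the parameter space of webs containing $P$, so it suffices to exhibit a single web for which both hold; this can be done by a direct diagonal/staircase choice of the linear forms $L_{ik}$, reducing the verification to an elementary explicit computation of $4\times 4$ minors and of one Hessian.
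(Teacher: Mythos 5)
Your argument is essentially the paper's argument for the count, presented in a different formalism. The paper confines the singular locus to $P$ by Bertini, then counts the points where the $4\times 5$ Jacobian matrix of linear forms drops rank by passing to the incidence variety $\{(\lambda,p):\sum_j\lambda_j l_j(p)=0\}\subset\p^3\times\p^2$ and computing the intersection of five divisors of type $(1,1)$, which gives $\binom{5}{2}=10$. Your Thom--Porteous computation $c_1^2-c_2=16h^2-6h^2=10h^2$ for the rank-$\leq 3$ locus of $\varphi\colon\mathcal O_P^{\oplus 5}\to\mathcal O_P(1)^{\oplus 4}$ is literally the pushforward of the same intersection class (the paper's incidence variety is the projectivized cokernel of $\varphi$), so the two counts are dual descriptions of one computation. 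Where you genuinely diverge is the node claim: the paper disposes of it by citing \cite{Kap}, whereas you give a self-contained local criterion --- after normalizing so that $Q_4=Q_\lambda$ is the unique member singular at $p$ and $Y=Q_1\cap Q_2\cap Q_3$ is smooth fourfold there, $p$ is a node iff the Hessian of $Q_4$ restricted to $T_pY$ is nondegenerate. This criterion is correct (since $dQ_4$ vanishes at $p$, no second fundamental form correction enters), and making it explicit is a real improvement in transparency over the citation.

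The gap is that your last step is announced but not performed. Both the reducedness of the degeneracy locus with $\rk M(p)=3$ at each point and the nondegeneracy of the restricted Hessian are indeed open conditions on the $104$-dimensional parameter space of webs containing $P$, so a single explicit web would suffice; but you do not exhibit one, nor do you give the ``standard Bertini/parameter-count argument'' for distinctness beyond asserting it. (For distinctness the assertion is harmless: as $W_P$ varies the restrictions $L_{ik}|_P$ sweep out all $4\times 5$ matrices of linear forms on $\p^2$, and the generic such matrix has a reduced rank-$3$ locus --- this is the same genericity the paper invokes via ``generic very ample divisors.'') For the Hessian condition, however, nothing in your text verifies that it holds for even one web, so as written the node statement rests on an unproved claim. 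Either carry out the diagonal/staircase example, or argue directly: fixing $p$ and the data forcing $p\in\Sing(BS(W_P))$, the quadric $Q_4$ singular at $p$ can still be perturbed freely in its quadratic part transverse to the constraints, so its restriction to the fixed $4$-plane $T_pY$ is a generic quadratic form, hence nondegenerate. Without one of these, the proof of the ``nodes'' half of the proposition is incomplete.
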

\begin{proof}
Due to Bertini's theorem it is enough to consider the singular points on the plane $P$.

We keep the notation of theorem \ref{sing}. We consider the variety in $\p^3\times \p^2$ defined by the ideal $I=(\sum_{j=1}^4\lambda_jl_j^1,\dots ,\sum_{j=1}^4\lambda_jl_j^4)$, where $l_j^i$ is the $i$-th entry of vector $l_j$. This ideal corresponds to the intersection of $5$ sections of type $(1,1)$, so it is enough to count $(H_1+H_2)^5=(^5_2)H_1^2H_2^3=10$ points, where $H_1$ and $H_2$ correspond respectively to a hypersurface of $\p ^2$ times $\p ^3$ and $\p ^2$ times a hypersurface of $\p ^3$. We obtain 10 points counted with multiplicity. This points are in the generic case distinct, because we intersected generic, very ample divisors. The fact that this points are nodes is a consequence of \cite{Kap}.
\end{proof}
\lem\label{prim}
The ideal $I$ generated by at most 4 generic quadrics in $\p^7$ that contain a plane $P$ is prime.
\klem
\dow
From the lemma \ref{rrad} we already now that this ideal is radical. Let $Y$ be an intersection of three generic quadrics. From the lemma \ref{smooth} we know that $Y$ is smooth. If $I$ was not prime, then the intersection of its components would have dimension at least $2$ that would contradict \ref{10osobliwych}.
\kdow
Now we will compute the Euler characteristic and Hodge numbers of the base locus.
\ob\label{eulersmooth}
Let $X$ be the base locus of a web $\textit{ W }$ of quadrics in $\p ^7$. The Euler characteristic $\chi (X)$ of $X$ is equal to -128.
\kob
\dow
The base locus $X$ is an intersection of four divisors $D_1,\dots ,D_4$, where $D_i$ is a divisor of a quadric that belongs to $\emph W$. Of course $\deg D_i=2$. Let $T_X$ be a tangent bundle of $X$. Using the formula \cite[3.2.12]{Fult} we get:
$$c(T_X)(1+2h)^4=(1+h)^{7+1},$$
where $c(T_X)$ is the total Chern class of the tangent bundle and $h$ is a hyperplane section on $X$. We have $h=c_1(i^*\mathcal{O}_{\p^7}(1))$ with $i:X\hookrightarrow \p^7$ is the inclusion of $X$ in $\p ^7$. We obtain $c_3(T_S)=-8h^3$. Now using \cite[3.2.13]{Fult} we get $\chi (S)=-8h^3\cap S$. Of course $\deg S=16$, so $\chi (S)=-8\times 16=-128$.
\kdow
\ob\label{hodgesmooth}
The Hodge numbers of Calabi-Yau manifolds that are complete intersections of four quadrics in $\p^7$ are respectivly:
\[h^{1,1}=1,\qquad h^{1,2}=65.\]
\kob
\dow
Let $X$ be a Calabi-Yau manifold given by the intersection of four quadrics in $\p^7$.
Using the adjunction formula exact sequence and Serre's duality we obtain:
$h^{1,2}=h^1(X,T_X)=65.$
\kdow

\subsection{Small resolution of $BS(W_P)$}

In our paper we use the following, algebraic definition of the Calabi-Yau manifold.
\dfi\label{dfCY}
Let $X$ be a smooth, $n$-dimensional, projective algebraic variety. We say that $X$ is a \emph{Calabi-Yau manifold} if and only if it satisfies the following conditions:
\begin{enumerate}
\item $K_X=0$,
\item$h^{i,0}=0$ for all $1\leq i<n$,
\end{enumerate}
where $K_X$ is the canonical divisor of $X$, $H^j(X,\Omega^k_X)$ is the $j$-th cohomology group of a sheaf of regular $k$-forms and $h^{k,j}=\dim H^j(X,\Omega^k_X)$.
\kdfi
Let $\textit W_P$ be a generic web of quadrics containing the plane $P$. Let $Y$ be the intersection of $3$ generic quad\-rics in the system. Let $\hat Y$ be the blow-up of $Y$ along the plane $P$. Let $\hat X$ be the birational transform of $X$.
\ob
The variety $\hat X$ is a Calabi-Yau manifold.
\kob
\dow
By proposition \ref{10osobliwych} the variety $\hat X$ is smooth. By the adjunction formula the canonical divisor $K_{\hat X}$ is equal to zero. Using the Lefschetz theorem and the exact sequence:
\[0\rightarrow \mathcal O_{\hat Y}(-\hat X)\rightarrow\mathcal O_{\hat Y}\rightarrow\mathcal O_{\hat X}\rightarrow 0\]
we see that $h^{0,1}_{\hat X}=0$, which proves the proposition.
\kdow
As a direct consequence of proposition \ref{10osobliwych} and proposition \ref{eulersmooth} we obtain the following result:
\ob
The Euler characteristic of $\hat X$ equals -108.
\kob


\section{Double cover of $\p ^3$ branched along the determinant surface}
Let $W$ be a web of quadrics in $\p^7$. We denote by $D_W\subset\p^3$ the locus of singular quadrics. If $W$ is sufficiently general then $D_W$ is a degree 8 surface given by: 
\[D_W=\{\lambda\in\p^3\mid\det(\lambda_1Q_1+\dots+\lambda_4Q_4)=0\},\]
where $Q_1,\dots,Q_4$ span $W$.
\lem\label{ranga}
For a generic web $\textit W$ of quadrics in $\p ^7$, singular points of $D_W$ correspond to quadrics of rank less or equal to 6 in the system $\textit W$.
\klem
\begin{proof}
We consider the $\p ^{35}$ that corresponds to the space of all symmetric matrices $8\times 8$. A generic web of quadrics corresponds to a generic three dimensional subspace of $\p ^{35}$. Let $O$ be the variety of quadrics of rank 7. Obviously $\dim O=34$. 

We first prove that for a generic system $\textit W$, the determinant octic $D_W$ does not have singular points that correspond to quadrics of rank 7. Due to lemma \ref{prim} it is enough to consider only systems that generate a radical ideal.

{\it{Step 1:}} Let us fix $Q_0\in O$. We answer now a following question: for which systems $\textit W$ of quadrics generating a radical ideal is $Q_0$ a singular point of $D_W$? We choose a coordinate system such that $Q_0=\diag (1,1,1,1,1,1,1,0)$. We may choose quadrics $Q_1$, $Q_2$ and $Q_3$ such that the system $W$ is spanned by them and the quadric $Q_0$. Let $Q_{\lambda}=\lambda_0 Q_0+\lambda_1 Q_1+\lambda_2 Q_2+\lambda_3 Q_3$ for $\lambda=(\lambda_0:\dots:\lambda_3)\in\p ^3$. Let $\lambda'=(1:0:0:0)$. Then the point $Q_0$ is a singular point of $D_W$ if and only if the following equations hold:
\begin{equation*}
{\frac{\partial}{\partial\lambda_0}\det Q_{\lambda}}_{|\lambda=\lambda'}=0
\end{equation*}
\begin{equation}\label{eq}
{\frac{\partial}{\partial\lambda_1}\det Q_{\lambda}}_{|\lambda=\lambda'}=0
\end{equation}
\begin{equation*}
{\frac{\partial}{\partial\lambda_2}\det Q_{\lambda}}_{|\lambda=\lambda'}=0
\end{equation*}
\begin{equation*}
{\frac{\partial}{\partial\lambda_3}\det Q_{\lambda}}_{|\lambda=\lambda'}=0.
\end{equation*}
Let $v_j^{(i)}$ be the $j$-th entry of the last column of matrix $Q_i$ and let $(Q_\lambda)_i$ be the $7\times 7$ matrix obtained from $Q_\lambda$ by deleting the $i$-th row and the last column. Using the Laplace formula we obtain:
\[{\frac{\partial}{\partial\lambda_i}\det Q_{\lambda'}}_{|\lambda=\lambda'}=\sum_{j=0}^7(-1)^{j+1}v_j^{(i)}\det (Q_{\lambda'})_i+\]\[+\sum_{j=0}^7(-1)^{j+1}(0\times v_j^{(1)}+0\times v_j^{(2)}+0\times v_j^{(3)})\frac{\partial}{\partial\lambda_i}\det (Q_\lambda)_i=v_7^{(i)},\]
because $\det (Q_{\lambda'})_i=0$ for $i\neq 7$ and $\det (Q_{\lambda'})_i=1$ for $i=7$. This means that the system of equations \ref{eq} is equivalent to $v_7^{(i)}=0$ for $i=1,\dots,4$. This condition holds if and only if all matrices in the system $\textit W$ have 0 as the entry in the lower right corner.

All such matrices correspond to a projective hyperplane in $\p^{35}$. The set of webs of such quadrics can be parameterized by a Grassmannian $G(3,34)$ (3 vectors and $Q_0$ form a base of such a web). We see that the dimension of all webs for which there exists a quadric of rank $7$ that is a singular point of $D_W$ is at most $\dim O+\dim G(3,34)=34+93=127$. This means that for a generic web of quadrics, the quadrics of rank $7$ are not singular points of $D_W$.

{\it{Step 2:}} Now we prove, that for a system $\textit W$, matrices of rank less or equal to 6 are always singular points of $D_W$ (c.f. \cite[lemma 2.4]{Jan}).
Let $Q_0$ be a quadric in the system of rank less or equal to 6, $Q_0=\diag(1,\dots,0,0)$. We keep the rest of the notation. We obtain:
\[{\frac{\partial}{\partial\lambda_i}\det Q_{\lambda'}}_{|\lambda=\lambda'}=\sum_{j=0}^7(-1)^{j+1}v_j^{(i)}\det (Q_{\lambda'})_i+\]\[+\sum_{j=0}^7(-1)^{j+1}(0\times v_j^{(1)}+0\times v_j^{(2)}+0\times v_j^{(3)})\frac{\partial}{\partial\lambda_i}\det (Q_\lambda)_i=0,\]
because $\det(Q_{\lambda'})_i=0$.
\end{proof}
\tw\label{84}
For a generic web $\textit W$ the surface $D_W$ has exactly 84 singular points.
\ktw
\dow
From the lemma \ref{ranga} it is enough to check how many matrices of rank less or equal to $6$ there are in a generic web of quadrics. Once again we consider $\p ^{35}$ that correspond to the space of all $8\times 8$ matrices. The variety $M$ of the matrices of rank less or equal to 6 correspond to the zeros of ideal $i$ generated by all $7\times 7$ minors of $m$, where:
\[
m=\left[
\begin{array}{cccccccc}
x_0&x_1&x_2&x_3&x_4&x_5&x_6&x_7\\
x_1&x_8&x_9&x_{10}&x_{11}&x_{12}&x_{13}&x_{14}\\
x_2&x_9&x_{15}&x_{16}&x_{17}&x_{18}&x_{19}&x_{20}\\
x_3&x_{10}&x_{16}&x_{21}&x_{22}&x_{23}&x_{24}&x_{25}\\
x_4&x_{11}&x_{17}&x_{22}&x_{26}&x_{27}&x_{28}&x_{29}\\
x_5&x_{12}&x_{18}&x_{23}&x_{27}&x_{30}&x_{31}&x_{32}\\
x_6&x_{13}&x_{19}&x_{24}&x_{28}&x_{31}&x_{33}&x_{34}\\
x_7&x_{14}&x_{20}&x_{25}&x_{29}&x_{32}&x_{34}&x_{35}\\
\end{array}
\right].
\]
Using the program Singular \cite{Singular} we see that the variety $M$ is of dimension 32 and degree 84 (program \ref{l1}). This means that a generic three dimensional hyperplane has exactly 84 points in common with $M$, which proves the theorem.
\kdow
\prog\label{l1}
\begin{verbatim}

ring r=0,x(1..36),dp;
matrix m[8][8]=x(1..8),x(2),x(9..15),x(3),x(10),x(16..21),
x(4),x(11),x(17),x(22..26),x(5),x(12),x(18),x(23),
x(27..30),x(6),x(13),x(19),x(24),x(28),x(31..33),
x(7),x(14),x(20),x(25),x(29),x(32),x(34),x(35),x(8),x(15),
x(21),x(26),x(30),x(33),x(35),x(36);
ideal i=minor(m,7);
i=std(i);
degree(i);
//dimension(proj.) = 32;
//degree(proj.) = 84;
\end{verbatim}
\kprog
\uwa
Due to lemma \ref{ranga} theorem \ref{84} is a consequence of \cite[prop. 12 b)]{Tu}, because the number of singular points is equal to:
\[\prod_{a=0}^1\frac{(^{8+a}_{2-a})}{(^{2a+1}_{\hskip 10pt a})}=84.\]
\kuwa
\begin{rem}\label{oszust}
One can easily check that the points described in theorem \ref{84} are nodes. It is enough to look at all quadrics of rank 7 in the neighborhood of a point corresponding to a quadric of rank 6. 
\end{rem}

\begin{rem}\label{84rang7pl}
Using the same arguments as above we can prove that for a generic web $W_P$ of quadrics containing a plane, there are $84$ quadrics of rank $6$ that are singular points of $D_{W_P}$.
\end{rem}
However, as we will prove later, in the case of remark \ref{84rang7pl} there are 10 more singular points of $D_W$ that correspond to some special matrices of rank 7.
\lem \label{si} Let $\textit W_P$ be a generic web of quadrics containing a fixed plane $P$. Then all quadrics of rank less or equal to 6 that belong to $\textit W_P$ do not have singular points on $P$.
\klem
\dow
We choose such a system of coordinates that \[P=\{(x_0;\dots ;x_7)\in\p ^7\mid x_0=\dots=x_4=0\}.\] Let $m\in \textit W_P$ be an $8\times 8$ matrix. Let $l_i$ be the $i$-th column of $m$. Of course in a generic case the ideal generated by the quadric $m$ is radical. We see that $m$ has a singular point on $P$ if and only if $l_6$, $l_7$ and $l_8$ are linearly dependent. Using Singular \cite{Singular} we can prove that a generic system $\textit W_P$ does not contain matrices of rank less or equal to 6 that have three last columns dependent (program \ref{l2}). The theorem follows.
\kdow
\prog\label{l2}
\begin{verbatim}

ring r=0,x(1..30),dp;
matrix m[8][8]=x(1..8),x(2),x(9..15),x(3),x(10),x(16..21),
x(4),x(11),x(17),x(22..26),x(5),x(12),x(18),x(23),
x(27..30),x(6),x(13),x(19),x(24),x(28),0,0,0,x(7),x(14),
x(20),x(25),x(29),0,0,0,x(8),x(15),x(21),x(26),x(30),0,0,0;
matrix n[3][8]=m[5..8,1..8];
ideal i=minor(m,7);
ideal j=minor(n,3);
i=j,i;
i=std(i);
degree(i);
//dimension(proj.) = 25;
//degree(proj.) = 5;
\end{verbatim}
\kprog
\tw\label{10}
If $\textit W_P$ is a generic web of quadrics that contains a plane $P$ in its base locus then $D_{\textit W_P}$ has 94 singular points: 84 that correspond to quadrics of rank less or equal to 6 and 10 that correspond to quadrics of rank 7 that have singularities on the plane $P$.
\ktw
\dow
Due to remark \ref{84rang7pl} is enough to prove that there are 10 singular points that belong to $D_{\textit W_P}$ that correspond to matrices of rank 7. We will associate with each such quadric, exactly one singular point of $BS(\textit W_P)$.
Let $a_1,\dots, a_{10}$ be $10$ singular points of $BS(\textit W_P)$ described in \ref{10osobliwych}. Let $Q_1,\dots ,Q_4$ be the generators of the system $\textit W_P$ and let $f_i$ be the equation of $Q_i$. We know that the matrix $\frac{\partial f_i}{\partial x_j}(a_k)$, $1\leq i\leq 4$, $0\leq j\leq 7$ has rank less or equal to 3 for each $1\leq k\leq 10$. That means that for each point $a_k$ there exist $\lambda_1,\dots ,\lambda_4$ such that: \[\sum_{i=1}^4\lambda_idf_i(a_k)=0.\] Of course this equality tells us that $a_k$ is a singular point of $g_k=\lambda_1f_1+\dots +\lambda_4f_4$. The quadric $g_k$ has a singularity on the plane $P$, so for a generic system $\textit W_P$, it is of rank 7 due to lemma \ref{si}. In a suitable system of coordinates we may assume that $g_k=\diag(1,1,1,1,1,1,1,0)$ and $a_k=(0,\dots , 0,1)$. The point $a_k$ belongs to all members of the system and that means that in this system of coordinates other matrices in $\textit W_P$ have zero as the last entry. This tells us, as already stated in lemma \ref{ranga}, that $g_k$ is a singular point of $D_{\textit W_P}$. On the other hand if some $g$ is a singular point of $D_{\textit W_P}$ and $g$ is of rank 7, then one easily sees that the singular point of $g$ lies on the base locus of the system (compare also with the proof of the lemma \ref{ranga}) and so it is a singular point of this variety. Of course, as we have already proved in proposition \ref{10osobliwych} the numbers $\lambda_1,\dots ,\lambda_4$ are, in a generic case, unique for each $a_k$, which proves the theorem.
\kdow
\lem
A nonsingular surface $S$ of degree 8 in $\p ^3$ has Euler characteristic 304.
\klem
\dow
Once again we use the example \cite{Fult} to see that $c_2(T_S)=38h^2$, where $h$ is a class of a hyperplane section and $T_S$ is the tangent bundle of $S$. This means that the Euler characteristic of $S$ is equal to $\deg (38 h^2\cap S)=8\times 38=304$.
\kdow
\lem\label{z}
The double cover $C$ of $\p ^3$ branched along a smooth surface $S$ of degree 8 has the Euler characteristic $\chi(C)=-296$.
\klem
\dow
It is well known that the Euler characteristic is additive for algebraic sets over $\mathbb C$, and $\chi (\p ^n)=n+1$, so $\chi (\p^3\backslash S)=4-304=-300$. We obtain $\chi (C)=2\times\chi(\p^3\backslash S)+\chi (S)=-600+304=-296$.
\kdow
\ob\label{eulerpdnakgen}
Let $W$ be a generic web of quadrics in $\p^7$. Let $Z$ be the double cover of $\p^3$ branched along the determinant surface $D_W$. Then the Euler characteristic $\chi(Z)=-212$.
\kob
\dow
Let $C$ be as in lemma \ref{z}. Using \cite[cor. 4.4]{D} and remark \ref{oszust} we obtain:
\[\chi(Z)=\chi(C)+84=-212.\]
\kdow
\wn
Any small resolution $\widehat Z$ of the variety $Z$ described in \ref{eulerpdnakgen} has Euler's characteristic $\chi(\widehat Z)=\chi(Z)+84=-128$.
\kwadrat\kwn
Analogously for a generic web containing a plane in the base locus:
\wn\label{eulernakroz}
A small resolution $\widehat {Z'}$ of the double cover $Z'$ of $\p^3$ branched along the determinant surface has Euler's characteristic $\chi (\widehat {Z'})=-108$.
\kwn\kwadrat
\ob\label{hodge}
The Hodge numbers of any small resolution $\widehat {Z'}$ are respectively:
\[h^{1,1}=2,\qquad h^{1,2}=56.\]
\kob
\dow
We can compute $h^{1,1}$ using theorems on defects \cite{Cynk1, Cynk2} and then $h^{1,2}$ from corollary \ref{eulernakroz}.
\kdow

\section{The correspondence variety and the birationality of described constructions}
In this section our aim is to prove the main theorem:
\begin{thm}\label{glowne}
Let $\textit W_P$ be a generic web of quadrics containing a plane $P$. Then the base locus of $W_P$ and the double cover of $\p^3$ branched along the determinant surface corresponding to degenerated quadrics are birational varieties.
\end{thm}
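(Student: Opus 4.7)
My plan is to realise the birational equivalence by means of an explicit incidence construction based on the two rulings of $\mathbb{P}^3$'s through $P$ cut out on members of the web.

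For each $\lambda\in\mathbb{P}^3$ the quadric $Q_\lambda$ is a $6$-dimensional quadric in $\mathbb{P}^7$ containing $P$. When $Q_\lambda$ is smooth (equivalently $\lambda\notin D_{W_P}$), the isotropic plane $P$ lies in exactly two maximal isotropic $\mathbb{P}^3$'s, one from each ruling, and these two $\mathbb{P}^3$'s collide precisely when $Q_\lambda$ degenerates. The incidence variety
\[
 Y\;=\;\bigl\{(\lambda,L)\in\mathbb{P}^3\times G(4,8)\ :\ P\subset L\subset Q_\lambda\bigr\}
\]
is therefore a double cover of $\mathbb{P}^3$ ramified along $D_{W_P}$, and hence birational to $Z'$. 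It will suffice to exhibit a birational equivalence $X\dashrightarrow Y$.

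To construct $\phi\colon X\dashrightarrow Y$, take a generic $x\in X\setminus P$ and set $L(x):=\langle x,P\rangle$, a $\mathbb{P}^3$ containing $P$. Each $Q_\lambda$ vanishes on $P$, so $Q_\lambda|_{L(x)}=\ell\cdot m_\lambda$, with $\ell$ the linear form on $L(x)$ cutting out $P$ and $\lambda\mapsto m_\lambda$ linear. Since $Q_\lambda(x)=0$ and $\ell(x)\neq 0$ we get $m_\lambda(x)=0$ for every $\lambda$, so the linear map $\mathbb{C}^4\to V_x$ factors through the three-dimensional space $V_x$ of linear forms on $L(x)$ vanishing at $x$; a dimension count shows its kernel is generically one-dimensional, giving a unique $\lambda(x)\in\mathbb{P}^3$ with $L(x)\subset Q_{\lambda(x)}$. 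Put $\phi(x):=(\lambda(x),L(x))\in Y$.

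The inverse $\psi\colon Y\dashrightarrow X$ is equally explicit. Given $(\lambda,L)\in Y$, complete $Q_\lambda$ to a basis $Q_\lambda,Q_{\mu_1},Q_{\mu_2},Q_{\mu_3}$ of $W_P$; then $Q_{\mu_i}|_L=\ell\cdot m_{\mu_i}$, so that
\[
 L\cap X\;=\;\bigcap_{i=1}^3\bigl(P\cup\{m_{\mu_i}=0\}\bigr)\;=\;P\cup\{x_L\},
\]
where $x_L$ is the unique common zero in $L$ of three generic hyperplanes. Setting $\psi(\lambda,L):=x_L$, the two compositions are the identity on Zariski open subsets: $\psi\circ\phi$ returns $x$ since $x$ is the only point of $L(x)\cap X$ off $P$, and $\phi\circ\psi$ returns $(\lambda,L)$ since $L(x_L)=\langle x_L,P\rangle=L$ and $\lambda(x_L)$ is unique.

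The main technical obstacle lies in verifying the several genericity assumptions hidden in the construction: that for generic $x\in X\setminus P$ the map $\lambda\mapsto m_\lambda$ really has a one-dimensional kernel (so $\phi$ is well defined and single-valued), that the three hyperplanes $\{m_{\mu_i}=0\}$ inside $L$ meet in a single point off $P$ for generic $(\lambda,L)\in Y$, and that $Y$ is indeed birational to $Z'$ (in particular that the branch divisor of $Y\to\mathbb{P}^3$ coincides with $D_{W_P}$, using lemma~\ref{ranga}). Each of these reduces to a parameter count of the type already carried out in propositions~\ref{10osobliwych} and~\ref{ranga}, and once they are in place the theorem follows by combining $\phi$ with the birational equivalence $Y\sim Z'$.
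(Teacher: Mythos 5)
Your construction is essentially the paper's own proof in a light repackaging: your ruling variety $Y$ plays the role of the paper's incidence variety $S$, your map $\phi$ (unique $\lambda$ with $\langle x,P\rangle\subset Q_\lambda$) is the content of lemma~\ref{punktkwadryke}, and your $\psi$ (residual point of $L\cap BS(W_P)$ beyond $P$) is the content of lemmas~\ref{zaw1}, \ref{jednaw7}, \ref{przec} and \ref{dwadojeden}--\ref{jedendojeden}. The genericity verifications you defer are exactly where the paper spends its effort --- excluding residual lines and double planes via the incidence-variety dimension counts of lemmas~\ref{proste}, \ref{nolineonP}, \ref{nodoubleP}, \ref{skonpr} and the uniqueness of the plane $P$ from proposition~\ref{dim2} --- and they are of the kind you describe, so the outline is sound and matches the paper's route.
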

Let us give the main ideas of the proof.

Let $Q$ be a generic quadric of the web $W_P$. We will show that it contains two 3-dimensional projective spaces that contain the plane $P$ (lemma \ref{zaw1} below). Each of these projective 3-dimensional spaces intersects the base locus $BS(W_P)$ at $P$ (of course) and at one more point (all possibilities are described in the lemma \ref{przec} and the proof for the generic quadric uses a few technical lemmas \ref{wykl}, \ref{proste}, \ref{nolineonP}, \ref{nodoubleP}, \ref{dwadojeden} and \ref{jedendojeden}). 

We also prove that a generic degenerated quadric contains one 3-dimensional projective space that contains the plane $P$ (lemma \ref{jednaw7}). This projective 3-dimensional space intersects the base locus $BS(W_P)$ also at $P$ and at one more point.
Conversely, it turns out (lemma \ref{punktkwadryke}) that generically a point of the base locus of $W_P$ also determines a unique quadric.
This gives us the birational map involved in the theorem.

We will start by recalling some facts about linear subspaces contained in a quadric.
\lem\label{zaw1}
Let $Q$ be a non-degenerated quadric in $\p^{2n+1}$ that contains a vector subspace $P$ of projective dimension $n-1$. Then $Q$ contains exactly two vector subspaces of projective dimension $n$ that contain $P$.
\kwadrat\klem
\lem\label{jednaw7}
Let $Q$ be a quadric in $\p ^7$ of rank $7$ that contains a plane $P$. If the singular point of $Q$ is not on $P$, then $Q$ contains exactly one linear subspace of dimension $3$ that contains $P$.
\klem
\dow
Let $R$ be the singular point of $Q$. By choosing a suitable system of coordinates we may suppose that $Q=\diag(1,1,1,1,1,1,1,0)$. Of course $R=(0:\dots :0:1)$. Let $L$ be the hyperplane given by $x_7=0$. Since $R\not\in P$ if we project $Q$ from $R$ on $L$ we obtain a smooth quadric that contains the projection $P'$ of $P$. One can easily see that $M=\{(x_0:\dots :x_7)\mid (x_0:\dots :x_6)\in P'\}$ is a hyperplane of dimension $3$ contained in $Q$, that contains $P$. If there existed another hyperplane $M'$ with such a property then we would be able to choose $x\in M'\setminus M$. Let $x'$ be the projection of $x$ onto $L$. From definition, $x'\not\in P'$. The projection of $M'$ onto $L$ would be a hyperplane that contains $P'$ and $x'$, so would be of dimension at least $3$. The theorem \ref{sing} shows that in this situation the projection of $Q$ would be singular, which is not true.
\kdow
\uwa\label{sty}
The sum of subspaces described in lemmas \ref{jednaw7} and \ref{zaw1} for a quadric $Q$ equals $Q\cap \bigcap_{p\in P}T_pQ,$ where $T_pQ$ is the tangent space at $p$ to $Q$.
\kuwa
\lem\label{przec}
Let $\textit W_P$ be a web of quadrics containing a plane $P$ in its base locus. Let $C$ be a three dimensional subspace that contains $P$. Then the scheme theoretic intersection $BS(\textit W_P)\cap C$ is one of the following:

1) the plane $P$ and a point outside $P$,

2) the plane $P$ and a point on it (exact explanation in the proof)

3) the plane $P$ and a line intersecting it properly,

4) the plane $P$ and a line on it,

5) double plane $P$,

6) two planes (one of them is of course $P$),

7) the whole $\p^3$,

8) the plane $P$.
\klem
\dow
Let $\textit W_P$ be spanned by four quadrics $Q_1,\dots ,Q_4$. Let $Q_i'$ be the restriction of each quadric $Q_i$ to $C$. The restriction of each quadric $Q_i$ contains the plane $P$, so each restriction is either zero or has two components: planes, one of which is $P$. Let $P_i$ be the second one. Of course the base locus of $\textit W_P$ restricted to $C$ is the intersection of $Q_i'$. This is the plane $P$ and the intersection of (at most) four planes $P_i$. The lemma describes all the possibilities of this intersection.
\kdow
\ob\label{wykl}
For a generic system $\textit W_P$ cases 6 and 7 of the lemma \ref{przec} do not occur.
\kob
\dow
In those cases the system would contain two different planes in its base locus and this is not the case as we showed in the proof of theorem \ref{dim2}.
\kdow
We keep the notation of lemma \ref{przec}.
\ob\label{stycz}
Let $\textit W_P$ be a generic web of quadrics containing a plane $P$. Let $o\in P$ be a point of the intersection of four planes $P_i$. Then the linear subspace $C$ is contained in the tangent space $T_o(BS(\textit W_P))$ to the variety $BS(\textit W_P)$ at the point $o$. If $o$ is a smooth point of $BS(\textit W_P)$ then $C=T_o(BS(\textit W_P))$.
\kob
\dow
The linear subspace $C$ is contained in the tangent space at $o$ to each quadric $Q_i$, so is also contained in the tangent space of the base locus. The second part follows by comparing dimensions.
\kdow
\lem\label{proste}
A generic system $\textit W_P$ contains in its base locus only a finite number of lines that intersect the plane $P$ properly.
\klem
\dow
Let $M \subset G(3,8)\times G(2,8)$ be defined by:
\[M=\{(p,l)\mid\#(p\cap l)=1\}.\]
The variety $M$ corresponds to planes and lines intersecting properly in $\p^7$.
It is easy to check that $\dim M=23$.

Let $N$ be the subvariety of $M\times G(4,36)$, where $G(4,36)$ parameterizes webs of quadrics in $\p^7$ defined by: \[N=\{((p,l),s)\mid (p,l)\in M;\ p,l\subset BS(s)\}.\]
Let $p_1$ be the projection of $N$ onto the first coordinate. The fiber of $p_1$ is isomorphic to $G(4,28)$ that is of dimension 96, so $\dim N=119$. Let $Q$ be the subvariety of $G(4,36)$ that corresponds to the systems that contain a plane in its base locus. From theorem \ref{dim2} we know that dim $Q=119$. Let $p_2:N\rightarrow Q$ be the projection of $N$ onto the second coordinate. We see that the generic fiber of $p_2$ is of dimension at most 0, so a generic system that contains a plane $P$, contains a finite number of lines that intersect $P$ properly.
 \kdow

\subsection{The incidence variety S}

For a generic system $\textit W_P$ we will construct a variety $S\subset BS(\textit W_P)\times \p^3$, where $\p^3$ parameterizes all quadrics in the system $\textit W_P$. We will use the same notation for a point in $\p^3$ and a quadric corresponding to it. 
Let \[S_0=\{(p,q)\mid p\in q\cap\bigcap_{s\in P}T_s q\}\]
and let
\[S=\overline{S_0\backslash (P\times\p^3)}.\]
Due to remark \ref{sty} the variety $S_0$ corresponds to quadrics and points that appear as an intersection of $BS(\textit W_P)$ with the linear subspaces of dimension 3 that contain $P$ and are contained in a given quadric. Such an intersection always contains the plane $P$ and other points described in lemma \ref{przec}, that are most important for us. The variety $S$ is obtained from $S_0$ by removing a component corresponding to intersection points on $P$.

\dfi\label{defA}
For a given web $W_P$ we consider the set of all 3 dimensional subspaces containing $P$ and contained in some member of the web. Let $E$ be the subset of this set that consists of such subspaces that intersect $BS(W_P)$ in $P$ and a line (situations 3) and 4) of lemma \ref{przec}).
We define $A$ to be the set of all these lines contained in $BS(W_P)$ that appear as an intersection of some member of $E$ with $BS(W_P)$.
\kdfi
\lem\label{nolineonP}
For a generic web $\textit W_P$ there are no lines from the set $A$ that are contained in $P$.
\klem
\dow
We will bound the dimension of the set $Z\subset G(4,36)$ of systems that contain $P$ in the base locus and for which there is a line in $A$ that lies on $P$. For this purpose we consider a variety $Q\subset G(2,3)\times G(1,5)\times G(4,36)$, where $G(2,3)$ parameterizes lines on $P$, $G(1,5)$ parameterizes three dimensional projective subspaces containing $P$ in $\p^7$ and $G(4,36)$ parameterizes webs of quadrics in $\p^7$. The variety $Q$ is defined as follows:
\[Q=\{(l,V,s)\mid l\subset P\subset BS(s), V\cap BS(s)=P\cup l\text{ and } \exists{k\in S}:V\subset k\}.\]
Let $q:Q\rightarrow G(2,3)\times G(1,5)$ be the projection of $A$ onto the first two coordinates. We consider the fiber of $q$ above $(l,V)$. We know that each quadric of the system in the fiber restricted to $V$ is either $V$ or defines two planes (one of them is $P$) intersecting in $l$. We may assume that:
\[P=\{(x_0:\dots:x_7)\mid x_0=\dots=x_4=0\},\]
\[l=\{(x_0:\dots:x_7)\mid x_0=\dots=x_5=0\},\]
\[V=\{(x_0:\dots:x_7)\mid x_0=\dots=x_3=0\}.\]
This means that the matrix corresponding to this quadric is of the form:
\[
\left[
\begin{array}{cccccccc}
*&*&*&*&*&*&*&*\\
*&*&*&*&*&*&*&*\\
*&*&*&*&*&*&*&*\\
*&*&*&*&*&*&*&*\\
*&*&*&*&*&*&0&0\\
*&*&*&*&*&0&0&0\\
*&*&*&*&0&0&0&0\\
*&*&*&*&0&0&0&0\\
\end{array}
\right]
\]
All such matrices form a 28 dimensional projective subspace, so the dimension of the fiber is isomorphic to $G(4,28)$ and is of dimension 96. This means that $\dim Q\leq 96+\dim G(2,3)+\dim G(1,5)=96+2+4=102$. Let $\pi: Q\rightarrow Z$ be the projection of $Q$ onto the last coordinate. Of course this is a surjection, so $\dim Z\leq 102$. Due to proposition \ref{dimcontaingivenplane} this means that the dimension of $Z$ is strictly lower then the dimension of the set of webs that contain $P$ in the base locus. This proves the theorem.
\kdow
\wn\label{nodoubleP}
A generic web $\textit W_P$ does not contain a quadric $q$ such that a three dimensional projective space $V$ is contained in $q$ and $V$ intersected with $BS(\textit W_P)$ is the double plane $P$ (case 5 of the lemma \ref{przec}).
\kwn
\dow
Taking any line $l$ belonging to $P$ we see that such a system would belong to $Z$. The corollary follows.
\kdow
\lem\label{skonpr}
For a generic web $\textit W_P$, the set $A$ is finite.
\klem
\dow
Due to lemma \ref{proste} there is only a finite number of lines that intersect $P$ properly and due to lemma \ref{nolineonP} there are no other lines.
\kdow

\lem\label{dwadojeden}
Let $q$ be a generic non-degenerated quadric belonging to a generic web $\textit W_P$. There are precisely two points $p_1$ and $p_2$ such that $(p_i,q)\in S$ for $i=1,2$. Each point $p_i$ is a smooth point of $BS(\textit W_P)$. \klem
\dow
From the lemma \ref{zaw1} we know that $q$ determines exactly 2 linear subspaces $T_1$ and $T_2$ that contain $P$.  Let $A$ be the set defined in \ref{skonpr}.

The set $A$ is finite (\ref{skonpr}) and contains only lines that intersect $P$ properly (\ref{nolineonP}), so case 4) from lemma \ref{przec} is impossible. Each line in $A$ contains a smooth point $x$ of $BS(\textit W_P)$, so by \ref{stycz} it determines just one three dimensional projective space (spanned by $P$ and $x$). Let $A'$ be the set of those linear subspaces that contain $P$, are contained in some quadric in the system and intersect $BS(\textit W_P)$ in the plane $P$ and a line (case 3 of the lemma \ref{przec}). 

Let $l\in A$ be a line and $L\in A'$ the corresponding projective subspace. The line $l$ is equal to the intersection of the components $P_i$ different from $P$ of the restrictions of quadrics $Q_1,\dots, Q_4$ that span $W_P$ to the subspace $L$. This means that exactly two of the planes $P_i$ are linearly independent. We may assume that:
\[P_3=aP_1+bP_2,\]
\[P_4=cP_1+dP_2.\]
We see that a quadric $q=\sum_{i=1}^4\lambda_i Q_i$ contains $L$ if and only if:
\[\lambda_1P_1+\lambda_2P_2+\lambda_3(aP_1+bP_2)+\lambda_4(cP_1+dP_2)=0,\]
that is equivalent to:
\[\lambda_1+\lambda_3a+\lambda_4c=0,\]
\[\lambda_2+\lambda_3b+\lambda_4d=0.\]
These equations determine a line in $\p^3$. This means that to each subspace in $A'$ corresponds exactly one line in $\p^3$. The set $A'$ is finite, so for a generic quadric $q\in W_P$ the subspaces $T_1$ and $T_2$ do not belong to $A'$.

By what we just proved and due to corollary \ref{nodoubleP} and proposition \ref{wykl} the only possibilities left are cases 1), 2) and 8) of lemma \ref{przec}. Let $q=\sum_{j=1}^4\lambda_j Q_j$. The plane $T_i$ belongs to $q$, so $\sum_{j=1}^4\lambda_j Q_j'=0$. Let $f_i$ be the equation of $P_i$. We know that $\sum_{j=1}^4\lambda_j f_j=0$, so the subspaces $P_i$ are linearly dependent. This means that their intersection is not empty, so the only possibilities are cases 1) and 2). We can therefore define $p_i$ to be the points that are components different from $P$ of the intersection of $T_i$ with $BS(W_P)$. First we will prove that in a generic situation each $p_i$ is smooth, then that $p_1\neq p_2$.
We want to see for which quadrics the intersection of $T_i$ with $BS(\textit W_P)$ is (as a component different from $P$) one of the ten singular points described in \ref{10osobliwych}. Suppose that $a$ is a singular point of $BS(\textit W_P)$. Due to lemma \ref{smooth} the tangent space $T_a(BS(\textit W_P))$ is of dimension 4, so the linear subspace $T_i$ contains $P$ and is contained in some fixed 4 dimensional linear subspace thanks to \ref{stycz}. The dimension of the set of all such linear subspaces equals $\dim G(1,2)=1$. Each such subspace determines a quadric uniquely, so the set of quadrics for which $p_i$ is singular is of dimension one.

To prove that $p_1\neq p_2$ it is enough to use \ref{stycz} and notice that a quadric determines two \emph{different} linear subspaces as proved in \ref{zaw1}.
\kdow

\lem\label{jedendojeden}
Let $\textit W_P$ be a generic system containing a plane $P$ in its base locus. For a generic quadric $q$ of rank $7$ there exists exactly one point $p$ such that $(p,q)\in S$.
\klem
\dow
The quadrics of rank $7$ correspond to an open subset of the determinant surface $D_{W_P}$. We may apply lemma \ref{jednaw7}, because there are only 10 quadrics that have a singular point on $P$, such a point is also a singular point of $BS(\textit W_P)$. We can repeat the proof of lemma \ref{dwadojeden}, because each time we removed a one dimensional subset, which proves the theorem.
\kdow
\lem\label{punktkwadryke}
For a generic system $\textit W_P$ let $L$ be the sum of all lines in the set $A$ defined in \ref{defA}.
For any point $p\in BS(\textit W_P)\backslash (L\cup \sing(BS(\textit W_P)))$ there is exactly one quadric $q$ such that $(p,q)\in S$.
\klem
\dow
First we show the existence.
Let $Q_1,\dots,Q_4$ be quadrics that span $\textit W_P$. Let $C$ be a linear subspace of dimension 3 spanned by $P$ and $p$ if $p\not\in P$ or $C=T_p (BS(W_P))$ if $p\in P$. Let $g_i$ be the equation of the restriction of $Q_i$ to $C$. We get that $g_i=f\times f_i$ where $f$ defines $P$ and $f_i$ defines another plane. We know that $(f_1,\dots,f_4)$ defines a point, so $f_i$ are linearly dependent. Suppose $\sum_{i=1}^4\lambda_i f_i=0$. Of course the quadric $Q=\sum_{i=1}^4\lambda_i Q_i$ satisfies the conditions of the theorem.

Now we show the uniqueness. If there existed two quadrics $Q_1$ and $Q_2$ that contained $C$, we would be able to choose $Q_3$ and $Q_4$ such that $Q_1,\dots,Q_4$ would span $\textit W_P$. The intersection of the $BS(W_P)$ with $C$ would be the intersection of the restriction of $Q_3$ and $Q_4$ to $C$. This is the plane $P$ and an intersection of two hyperplanes. This of course cannot be the plane $P$ and the point $p$. The contradiction proves the theorem.
\kdow
\dow[Proof of theorem \ref{glowne}]
Due to lemmas \ref{dwadojeden}, \ref{jedendojeden} and \ref{punktkwadryke} we may consider the set $E\subset \p^3$ such that $\dim E\leq 1$ and the variety $S$ gives a correspondence of $BS(\textit W_P)\backslash (L\cup \sing(BS(\textit W_P)))$ and $\p^3\backslash E$ that with each non-degenerated quadric associates 2 points and with each singular quadric exactly one point.
This gives a birational map between  $BS(\textit W_P)$ and a double cover of $\p^3$ branched along the determinant surface corresponding to degenerated quadrics.
\kdow

\uwa
The theorem \ref{glowne} does not hold for a generic intersection of four quadrics and a small resolution of the corresponding double cover. Although the Euler characteristics are equal, the Calabi-Yau manifolds are not birational. On the complete intersection, due to the Picard-Lefschetz theorem, every divisor has self intersection at least 16. On the small resolution of the double cover there is a divisor that has self intersection 2, namely the pullback of the hyperplane section of $\p^3$.
\kuwa
\uwa\label{euler2}
Using the theorem \ref{glowne} and \cite{Batyrev} the Hodge numbers of the generic intersection containing a plane and small resolution of a corresponding double cover are equal. The second ones were computed in \ref{hodge}.
\kuwa
\section*{Acknowledgements}
I would like to thank very much S. Cynk for introducing me to the subject and guiding me through it. I am also grateful for many useful ideas.

I thank L. Bonavero for teaching me a lot about K3 surfaces, showing me easier proofs and particularly for finding and correcting many mistakes.

I also owe thanks to G. Kapustka and M. Kapustka for interesting talks about Calabi-Yau varieties.

\vskip10pt

\end{document}